\newtheorem{theorem}{Theorem}
\newtheorem{lemma}[theorem]{Lemma}
\newtheorem{corollary}[theorem]{Corollary}
\theoremstyle{definition}
\newtheorem{definition}[theorem]{Definition}
\theoremstyle{remark}
\newtheorem{remark}[theorem]{Remark}
\newcommand{\cC}{\mathcal{C}}
\newcommand{\cM}{\mathcal{M}}
\newcommand{\cpu}{\cz \mathrm{P}^1}
\newcommand{\cun}{\cC^\infty}
\newcommand{\cz}{\mathbb{C}}
\newcommand{\fu}{{1_\Sigma}}
\newcommand{\nz}{\mathbb{N}}
\newcommand{\pr}{\partial_r}
\newcommand{\Res}{\mathrm{Res}}
\newcommand{\rpd}{\rz \!\mathrm{P}^2}
\newcommand{\rz}{\mathbb{R}}
\newcommand{\tz}{\mathbb{T}}
\newcommand{\zz}{\mathbb{Z}}
\begin{document}
\title{Uniformization of $S^2$ and flat singular surfaces}
\date{\today}
\author{Sergiu Moroianu}
\thanks{Partially supported by grant PN-II-ID-PCE 1188 265/2009}
\address{Institutul de Matematic\u{a} al Academiei Rom\^{a}ne\\
P.O. Box 1-764\\
RO-014700 Bucharest, Romania}
\email{moroianu@alum.mit.edu}
\subjclass[2000]{Primary: 58J60}

\begin{abstract}
We construct flat metrics in a given conformal class with prescribed singularities of real orders at marked points of a closed real surface. The singularities can be small conical, cylindrical, and large conical with possible translation component. Along these lines we give an elementary proof of the uniformization theorem for the sphere.
\end{abstract}

\maketitle

\section{Uniformization in genus $0$ and holomorphic structures on conformal surfaces}

Let $(\Sigma,g)$ be a closed, possibly unorientable Riemannian manifold of dimension $2$. For background material on surfaces we refer the reader to \cite{M}.
The Gaussian curvature $\kappa_g:\Sigma\to\rz$ of the metric $g$ is the sectional curvature function of the tangent planes to $\Sigma$. If $f\in\cun(\Sigma,\rz)$ is any smooth function, an elementary computation shows that the Gaussian curvatures of the conformal metrics $g$ and $g':=e^{-2f}g$ are related by
\begin{equation}\label{ccc}
\kappa_{g'}=e^{2f}(\kappa_g-\Delta_g f)
\end{equation}
where $\Delta_g=d^*d$ is the Laplacian on functions with respect to the metric $g$.
By the Gauss-Bonnet theorem, 
\[\int_\Sigma \kappa_g |\mu_g|=2\pi\chi(\Sigma)\]
where $\chi(\Sigma)$ is the Euler characteristic of $\Sigma$, while $|\mu_g|$ is the volume density.
On the other hand, denoting by $\langle\cdot,\cdot\rangle$ the scalar product in $L^2(\Sigma,|\mu_g|)$ and by $\fu$ the constant function $1$ on $\Sigma$, we have
\[\int_\Sigma \Delta f|\mu_g|=\langle \Delta f,\fu\rangle=\langle d f,d\fu\rangle=0.\]
Thus in the conformal class of $g$ there exist flat metrics only if $\chi(\Sigma)=0$, i.e., $\Sigma$ is a torus or a Klein bottle.

\begin{theorem}\label{genus0}
Let $(\Sigma,g)$ be a possibly unorientable closed Riemannian manifold of dimension $2$. In the conformal class of $g$ there exist flat metrics (and then they are unique up to dilations) if and only if $\chi(\Sigma)=0$. 
\end{theorem}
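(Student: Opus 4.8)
The forward implication is already contained in the discussion preceding the statement: if $g'=e^{-2f}g$ is flat then $\kappa_{g'}=0$, so \eqref{ccc} forces $\Delta_g f=\kappa_g$; integrating against $|\mu_g|$ and using $\int_\Sigma\Delta_g f\,|\mu_g|=0$ together with Gauss--Bonnet yields $2\pi\chi(\Sigma)=0$. What remains is the converse together with the uniqueness clause, and the plan is to reduce the entire problem to the solvability of a single linear Poisson equation.

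By \eqref{ccc}, a conformal metric $e^{-2f}g$ is flat precisely when $f$ solves
\[\Delta_g f=\kappa_g.\]
Thus the construction of a flat metric is equivalent to solving this linear equation, and I would invoke the standard Fredholm theory for the Laplacian on a closed Riemannian surface. The operator $\Delta_g=d^*d$ on functions is elliptic, self-adjoint and nonnegative, so by Hodge theory its $L^2$-range is the orthogonal complement of its kernel. Assuming $\Sigma$ connected, $\ker\Delta_g$ consists of the constants, since $\langle\Delta_g u,u\rangle=\langle du,du\rangle=0$ forces $du=0$. Hence $\Delta_g f=\rho$ admits a solution iff $\langle\rho,\fu\rangle=0$, i.e. iff $\rho$ has vanishing mean, and elliptic regularity guarantees this solution is smooth whenever $\rho$ is.

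The only point left to verify is that the source term satisfies this single scalar obstruction, and here Gauss--Bonnet does all the work: $\langle\kappa_g,\fu\rangle=\int_\Sigma\kappa_g\,|\mu_g|=2\pi\chi(\Sigma)=0$ exactly under the hypothesis $\chi(\Sigma)=0$. So an $f$ with $\Delta_g f=\kappa_g$ exists and $e^{-2f}g$ is flat. For uniqueness up to dilation, I would take two flat conformal metrics $e^{-2f}g$ and $e^{-2h}g$; both potentials solve the same Poisson equation, so $\Delta_g(f-h)=0$, whence $f-h$ equals a constant $c$ on the connected surface, giving $e^{-2f}g=e^{-2c}\,(e^{-2h}g)$, a dilation.

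I expect the only genuine subtlety to be the nonorientable case, where there is no global volume form but only the density $|\mu_g|$. This is harmless: the Laplacian $d^*d$, the pairing $\langle\cdot,\cdot\rangle$, Gauss--Bonnet and the Hodge decomposition are all defined in terms of $|\mu_g|$ and make no use of an orientation, so the argument transfers verbatim (alternatively one may pull everything back to the orientation double cover and descend the invariant solution). The analytic heart of the proof is therefore the surjectivity of $\Delta_g$ onto the mean-zero functions, which is the main input and is entirely standard.
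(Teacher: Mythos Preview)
Your proof is correct and follows essentially the same route as the paper: reduce flatness to the Poisson equation $\Delta_g f=\kappa_g$, invoke the self-adjoint Fredholm property of $\Delta_g$ to identify its range as the mean-zero functions, and use Gauss--Bonnet to verify the obstruction vanishes exactly when $\chi(\Sigma)=0$. Your added remarks on elliptic regularity, the explicit uniqueness computation, and the nonorientable case are reasonable elaborations but do not change the argument.
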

\begin{proof}
The necessity of $\chi(\Sigma)=0$ was seen above, so assume $\chi(\Sigma)=0$. The Gauss-Bonnet theorem says  that the function $\kappa_g$ is of average $0$, or equivalently that it is orthogonal in $L^2$ sense on the constant function $1$. But $\Delta$ is self-adjoint and Fredholm in $L^2$, so 
its image is the orthogonal complement of its kernel. Moreover, $\phi\in\ker(\Delta)$ if and only if $\phi$ is constant. Thus $\kappa_g$ lives in the image of the Laplace operator, therefore we can find $f\in\cun(\Sigma)$, unique up to an additive constant, such that $\kappa_g=\Delta f$ or equivalently $\kappa_{e^{-2f}g}=0$.
\end{proof}

In dimension $2$, the Newlander-Nirenberg theorem states any almost complex structure on a surface is integrable, since the Nijenhuis tensor vanishes identically. As a corollary of Theorem \ref{genus0} we obtain an elementary proof of this fact. For this, notice that an almost complex structure in dimension $2$ is precisely an orientation together with a conformal class. 

\begin{corollary}\label{holstr}
Let $\Sigma$ be an (open) oriented smooth surface with a conformal structure $[g]$. Then there exist a holomorphic atlas on $\Sigma$ compatible with the given conformal structure.
\end{corollary}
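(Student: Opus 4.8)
The plan is to build a holomorphic atlas on $\Sigma$ by exhibiting, around each point, a local conformal coordinate, i.e.\ a $\cz$-valued chart whose pullback of $|dz|^2$ lies in the conformal class $[g]$. Once such charts exist, the transition maps between two overlapping conformal coordinates will automatically be orientation-preserving conformal diffeomorphisms of planar domains, hence holomorphic by the classical fact that an orientation-preserving conformal map between domains in $\cz$ satisfies the Cauchy--Riemann equations. So the crux is purely local: produce one local isothermal coordinate near an arbitrary point, and then the global statement follows by gluing and by the orientation-compatibility of the conformal class.

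\medskip

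To produce a local isothermal coordinate, I would reduce the problem to Theorem~\ref{genus0} by a doubling/compactification trick. Fix $p\in\Sigma$ and choose a small coordinate disk $D$ around $p$ on which $g$ is quasi-isometric to the Euclidean metric. The idea is to transplant the metric on $D$ to a metric $\hat g$ on a closed surface of Euler characteristic $0$ in such a way that the conformal class is preserved on a neighborhood of $p$. Concretely, embed $D$ conformally into a torus $\tz^2$ (for instance by first mapping $D$ into a square and then using the standard flat torus as the ambient closed surface), extend the conformal structure arbitrarily but smoothly to all of $\tz^2$, and apply Theorem~\ref{genus0} to the resulting closed surface: since $\chi(\tz^2)=0$, there is a function $f$ with $\kappa_{e^{-2f}\hat g}=0$. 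A flat metric $e^{-2f}\hat g$ in the conformal class is, locally near $p$, isometric to an open subset of the Euclidean plane; developing this local isometry gives a map $z:U\to\cz$ (with $U\subset\Sigma$ a neighborhood of $p$) for which $z^*|dw|^2=e^{-2f}\hat g$ is conformal to $g$. This $z$ is the desired local isothermal coordinate.

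\medskip

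The main obstacle, and the step requiring the most care, is the transplantation: I must ensure that the closed surface to which Theorem~\ref{genus0} is applied carries a metric whose \emph{restriction} to a neighborhood of $p$ is genuinely conformal to the original $g$, so that the resulting flat coordinate is a legitimate chart for $[g]$ and not merely for some unrelated metric. One clean way to arrange this is to note that any metric $g$ is, in local coordinates $(x,y)$, of the form $g=E\,dx^2+2F\,dx\,dy+G\,dy^2$, and the isothermal-coordinate problem is local and invariant under the conformal rescalings that Theorem~\ref{genus0} manipulates; thus the flat representative produced globally on $\tz^2$ restricts on $U$ to a flat metric conformal to $g$, and flatness of a surface metric is exactly the integrability condition guaranteeing the existence of Euclidean (hence complex) coordinates. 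Finally, I would check the cocycle condition: on triple overlaps the transition functions, being orientation-preserving conformal self-maps between subdomains of $\cz$, are holomorphic and their compositions agree, so the charts assemble into a bona fide holomorphic atlas compatible with $[g]$.
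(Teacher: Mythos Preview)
Your argument is correct and follows essentially the same route as the paper: transplant a coordinate neighborhood of $p$ onto the torus $\tz=\rz^2/\zz^2$ via a partition of unity, apply Theorem~\ref{genus0} to obtain a flat metric in the conformal class, and then use the resulting local isometry with $\rz^2$ (the paper uses the geodesic exponential map explicitly) as the isothermal chart. One phrasing to tighten: ``embed $D$ conformally into a torus'' reads as if you already had a conformal map, which would be circular---you mean to push the metric forward through an arbitrary smooth chart and then extend, exactly as the paper does.
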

\begin{proof}
Fix $g\in[g]$ and $p\in\Sigma$. Choose a positively oriented chart $\phi$ near $p$, $\phi(p)=0$ and construct a metric $h$ on the torus $\tz:=\rz^2/\zz^2$ which coincides with the push-forward $\phi_* g$ near $0+\zz^2$, by using a partition of unity. By Theorem \ref{genus0}, $h$ is conformally flat, so there exists $f\in\cun(\tz)$ such that $e^{-2f}h$ is flat. Then the geodesic exponential map $\exp_{[0]}$ with respect to  $e^{-2f}h$ is a local isometry between $T_{[0]}\tz$ and $\tz$. It follows that $e^{-2\phi^*f}g$ is isometric near $p$ to the euclidean space $\rz^2$ with its standard metric. This isometry is \emph{conformal} with respect to the initial metric $g$. Since $p$ was arbitrary, we have found an atlas consisting of conformal maps. The changes of charts are positively oriented conformal maps between domains of $\rz^2=\cz$ with the euclidean metric, i.e., holomorphic.
\end{proof}

The idea of the above proof can be succinctly stated as follows: since by Theorem \ref{genus0} every torus is conformally flat, it follows that every surface is locally conformally flat.

\section{Singular flat surfaces}

Let $(r,\sigma)\in(0,\infty)\times [0,2\pi)$ be the polar coordinates on $\rz^2$. We distinguish the following flat singular metrics near $r=0$:
\begin{itemize}
 \item The \emph{small conical metric} of angle $\alpha>0$:
\[dr^2 + \left(\frac{\alpha}{2\pi}\right)^2 r^2d\sigma^2.\]
\item The \emph{cylindrical metric} of circumference $l$, also called conical of angle $0$:
\[\frac{dr^2}{r^2}+\left(\frac{l}{2\pi}\right)^2d\sigma^2.\]
\item The \emph{large conical metric} of angle $\alpha<0$:
\[\frac{dr^2}{r^4} + \left(\frac{\alpha}{2\pi}\right)^2 \frac{d\sigma^2}{r^2}.\]
\end{itemize}
The conical metric of negative angle can be seen near $r=\infty$ (i.e., in a neighborhood of $\infty$ in $\cz P^1$) by changing variables $r\mapsto \frac{1}{r}$ as
\[dr^2+\left(\frac{\alpha}{2\pi}\right)^2 r^2d\sigma^2.\]
These types of singularities can be conveniently described simultaneously as follows: recall that the standard metric on $\rz^2$ in polar coordinates takes the form $dr^2+r^2d\sigma^2$ (in particular the conical metric of angle $\alpha=2\pi$ is in fact non-singular).
\begin{lemma}\label{mcon}
Let $\theta\in \rz$. The singular flat metric near $r=0$
\[\frac{dr^2+r^2 d\sigma^2}{r^{2\theta}}\]
is conical of angle $2\pi(1-\theta)$, and moreover of circumference $2\pi$ if $\theta=1$.
\end{lemma}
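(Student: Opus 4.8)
The plan is to start from the given metric $\frac{dr^2+r^2\,d\sigma^2}{r^{2\theta}}$ and massage it into one of the three normal forms listed above the lemma, by an explicit change of the radial variable. The radial part is $r^{-2\theta}\,dr^2$, so the natural move is to introduce a new coordinate $s=s(r)$ with $ds = r^{-\theta}\,dr$, which makes the radial term $ds^2$ and turns the problem into reading off the coefficient of $d\sigma^2$ as a function of $s$.

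First I would treat the generic case $\theta\neq 1$. Integrating $ds=r^{-\theta}\,dr$ gives $s=\frac{r^{1-\theta}}{1-\theta}$ (up to an additive constant, which I am free to absorb since only the behavior near the singularity matters). The angular coefficient is $r^{2-2\theta}=r^{2(1-\theta)}$, and since $r^{1-\theta}=(1-\theta)s$ we get $r^{2(1-\theta)}=(1-\theta)^2 s^2$. Hence the metric becomes $ds^2+(1-\theta)^2 s^2\,d\sigma^2$. Comparing with the standard conical model $dr^2+\left(\frac{\alpha}{2\pi}\right)^2 r^2\,d\sigma^2$, I read off $\frac{\alpha}{2\pi}=|1-\theta|$, i.e. $\alpha=2\pi(1-\theta)$ as claimed. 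The one subtlety here is the sign: for $\theta<1$ the substitution sends $r\to 0$ to $s\to 0$, giving a genuine small cone of positive angle $2\pi(1-\theta)$; for $\theta>1$ the substitution sends $r\to 0$ to $s\to\infty$, so the singularity sits at $s=\infty$, matching the large conical normal form written above near $r=\infty$ with $\alpha=2\pi(1-\theta)<0$. I would note this correspondence explicitly rather than leave it implicit.

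Next I would handle the borderline case $\theta=1$ separately, since the antiderivative of $r^{-1}$ is logarithmic rather than a power. Here $ds=r^{-1}\,dr$ gives $s=\log r$, and the angular coefficient $r^{2-2\theta}=r^0=1$ is constant. So the metric becomes $ds^2+d\sigma^2$ with $s\in(-\infty,0)$ as $r\in(0,1)$; this is precisely the cylindrical metric of circumference $2\pi$ (the $l=2\pi$ case of the cylindrical normal form $\frac{dr^2}{r^2}+\left(\frac{l}{2\pi}\right)^2 d\sigma^2$, which becomes $ds^2+d\sigma^2$ under the same $s=\log r$). This confirms both the ``angle $2\pi(1-\theta)=0$'' statement and the ``circumference $2\pi$'' refinement.

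The main obstacle is not any hard computation—the substitution is elementary—but rather bookkeeping the identification between the three separately-displayed normal forms and the single unified family, especially getting the sign conventions and the location of the singularity ($r\to 0$ versus $r\to\infty$) to line up with the definitions given. I would be careful to confirm that the ``angle'' is a conformal invariant defined by the normal form coefficient, so that matching the coefficient $|1-\theta|$ to $\left|\frac{\alpha}{2\pi}\right|$ genuinely pins down $\alpha$, and that the distinction between the $\theta<1$ and $\theta>1$ regimes corresponds exactly to the small-cone versus large-cone dichotomy already set up in the text.
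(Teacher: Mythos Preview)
Your proposal is correct and follows essentially the same approach as the paper: the paper's proof consists of exactly the radial substitution $R=\frac{r^{1-\theta}}{1-\theta}$ (for $\theta<1$), $R=r$ (for $\theta=1$), and $R=\frac{\theta-1}{r^{1-\theta}}$ (for $\theta>1$), which is your $ds=r^{-\theta}dr$ with the sign adjusted in the third case so that the singularity lands at $R=0$ in the large-cone normal form rather than at $s=\infty$. The only cosmetic difference is that for $\theta=1$ the paper keeps $R=r$ (the metric is already visibly the cylindrical model $\frac{dr^2}{r^2}+d\sigma^2$) whereas you pass to $s=\log r$; both are fine.
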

The (immediate) proof consists of re-writing the metric in the variable 
\[
R=\begin{cases}
\frac{r^{1-\theta}}{1-\theta}&\text{if $\theta<1$};\\
r,&\text{if $\theta=1$};\\
\frac{\theta-1}{r^{1-\theta}}&\text{if $\theta>1$}.
\end{cases}
\]

\section{Uniformization of the sphere and of the projective plane}

It is a classical fact, which we wish to partly clarify in this note, that in any given conformal class on a surface $\Sigma$ there exist metrics of constant Gaussian curvature, of sign depending on the Euler characteristic of $\Sigma$. The case of $\chi(\Sigma)=0$ was explained in Theorem 
\ref{genus0}. For $\Sigma$ of negative Euler characteristic the hyperbolic metric is moreover unique in each conformal class. We will not prove this here. Instead we will give a self-contained treatment of the positive case. 

Let first $\Sigma$ be a sphere with a conformal class $[g]$. Note that, in contrast to Teichm\"uller theory, a spherical metric in the conformal class $[g]$ is by no means unique, in fact such metrics are parametrized by the cosets
\[\mathrm{PSL}_2(\cz)/\mathrm{SO}_3\]
of conformal transformations of the standard sphere $\cpu$ modulo isometries, which form a real manifold of dimension $3$. This non-uniqueness is the main reason of failure of variational methods in proving the existence of spherical metrics.

Fix a smooth cut-off function $\phi:[0,\infty)\to [0,\infty)$ with compact support in $[0,\epsilon/2]$ and which is $1$ on $[0,\epsilon/4]$. 
\begin{lemma}\label{dlog}
For any $\phi$ as above, the function $\Delta(\phi(r)\log(r))\in\cun_c(\rz^2\setminus\{0\})$
has compact support in the $[\epsilon/4,\epsilon/2]$-annulus, and satisfies
\[\int_\Sigma \Delta(\phi(r)\log(r))  \mu_{\rz^2}=2\pi\]
where $\Delta$ is the flat Laplacian on $\rz^2$.
\end{lemma}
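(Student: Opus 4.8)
The plan is to use that $\log r$ is harmonic on the punctured plane, so that $\Delta$ can only detect the region where the cut-off $\phi$ actually varies. With the convention $\Delta=d^*d=-(\partial_x^2+\partial_y^2)$ in flat coordinates one has $\Delta(\log r)=0$ on $\rz^2\setminus\{0\}$, directly from $-r^{-1}\partial_r(r\,\partial_r\log r)=-r^{-1}\partial_r(1)=0$ in polar coordinates. On the disk $\{r\le\epsilon/4\}$ the function $\phi(r)\log(r)$ coincides with $\log r$ because $\phi\equiv1$ there, so its Laplacian vanishes pointwise away from $0$; on $\{r\ge\epsilon/2\}$ the function itself vanishes because $\phi\equiv0$. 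Hence $\Delta(\phi(r)\log(r))$ is smooth on $\rz^2\setminus\{0\}$ and supported in the closed annulus $\{\epsilon/4\le r\le\epsilon/2\}$, which is bounded and bounded away from the origin; this already gives the first assertion, that it defines an element of $\cun_c(\rz^2\setminus\{0\})$ with the stated support.

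The second assertion is where care is needed. The naive temptation is to integrate by parts as in the preamble, $\int\Delta u\,\mu_{\rz^2}=\langle\Delta u,\fu\rangle=\langle du,d\fu\rangle=0$; but this is invalid precisely because $u=\phi(r)\log(r)$ is not smooth at the origin, and the logarithmic singularity is exactly what produces the nonzero answer. To capture it, I would fix $0<\rho<\epsilon/4$ and use that the integrand vanishes on $\{r<\epsilon/4\}$ to write
\[\int_{\rz^2}\Delta(\phi(r)\log(r))\,\mu_{\rz^2}=\int_{\{r\ge\rho\}}\Delta(\phi(r)\log(r))\,\mu_{\rz^2}.\]
Writing $\Delta u=\mathrm{div}\,X$ with $X=-\mathrm{grad}(u)$ and applying the divergence theorem on $\{\rho\le r\le R\}$ for $R$ large (so that $u\equiv0$ near $r=R$), the outer boundary contributes nothing, while on the inner circle $r=\rho$ one has $u=\log r$, hence $X=-r^{-1}\partial_r$, and the outward normal of the region $\{r\ge\rho\}$ there is $-\partial_r$. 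The flux is therefore $\tfrac{1}{\rho}\cdot\mathrm{length}(\{r=\rho\})=\tfrac{1}{\rho}\cdot2\pi\rho=2\pi$, independent of $\rho$, which yields the claimed value.

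The only genuine obstacle is bookkeeping: one must keep straight the sign in $\Delta=d^*d$ versus the analysts' Laplacian and the inward orientation of the inner normal, since a sign slip here would spuriously produce $-2\pi$ or $0$. Conceptually the whole computation is the classical Gauss-law fact that the flux of $\mathrm{grad}(\log r)$ across any circle about $0$ equals $2\pi$; equivalently, $\phi(r)\log(r)$ is a compactly supported truncation of the fundamental solution, so that as a distribution on all of $\rz^2$ one has $\Delta(\phi(r)\log(r))=g-2\pi\,\delta_0$ with $g\in\cun_c(\rz^2\setminus\{0\})$ the smooth density above, and pairing both sides with a cutoff equal to $1$ near the support forces $\int_{\rz^2}g\,\mu_{\rz^2}=2\pi$.
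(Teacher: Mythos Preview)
Your argument is correct and essentially the same as the paper's: both reduce the integral to the boundary contribution of $r\partial_r\log r=1$ at the inner radius, the paper via direct polar integration $-2\pi\int_0^\infty\partial_r\bigl(r\partial_r(\phi\log r)\bigr)\,dr=-2\pi\,r\partial_r(\phi\log r)\big|_0^\infty=2\pi$, and you via the divergence theorem on the excised annulus $\{\rho\le r\le R\}$. The distributional remark at the end is a nice conceptual gloss but is not in the paper's proof.
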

\begin{proof}
In polar coordinates, $\Delta=-r^{-1}\pr r\pr-r^{-2}\partial_\sigma^2$ and the volume form is $rdrd\sigma$. The assertion on the support holds evidently. Thus 
\begin{align*}
\int_\Sigma \Delta(\phi(r)\log(r)) |\mu_g|=&\int_0^{2\pi}d\sigma\int_0^\infty (-r^{-1}\pr r\pr)(\phi(r)\log(r))
r dr\\=&-2\pi\int_0^\infty \pr r\pr(\phi(r)\log(r))dr\\
=&-2\pi r\pr(\phi(r)\log(r))|^\infty_0\\
=&2\pi.\qedhere 
\end{align*}
\end{proof}

\begin{theorem} \label{unifsph}
Let $\Sigma$ be a topological sphere with a smooth structure and a conformal class $[g]$. There exist in $[g]$
\begin{itemize}
\item Euclidean metrics, i.e., complete flat metrics outside one marked point $p$ with cone angle $-2\pi$;
\item Spherical metrics.
\end{itemize}
\end{theorem}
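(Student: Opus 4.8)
The plan is to treat the two assertions in turn, building the spherical metric on top of the Euclidean one. For the Euclidean metric I want to produce a function $f$ on $\Sigma\setminus\{p\}$ with a controlled logarithmic singularity at $p$ such that $g':=e^{-2f}g$ is flat on $\Sigma\setminus\{p\}$ and conical of angle $-2\pi$ there. By Lemma \ref{mcon} (with $\theta=2$) this target singularity corresponds to the leading behaviour $f\sim 2\log r$ in a conformal coordinate $z$ near $p$ with $r=|z|$; such a coordinate exists by Corollary \ref{holstr}. I therefore set
\[f = 2\phi(r)\log r + v,\]
with $\phi$ the cut-off of Lemma \ref{dlog} and $v\in\cun(\Sigma)$ to be found, and reduce the flatness condition $\kappa_{g'}=0$, which by \eqref{ccc} reads $\Delta_g f=\kappa_g$ on $\Sigma\setminus\{p\}$, to the equation $\Delta_g v = W$ on the \emph{compact} surface $\Sigma$, where $W:=\kappa_g-2\Delta_g(\phi(r)\log r)$.

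First I would check that $W$ is a genuine smooth function on all of $\Sigma$: since in dimension two $\Delta_g=e^{-2\rho}\Delta_{\mathrm{flat}}$ for $g=e^{2\rho}|dz|^2$, and $\log r$ is flat-harmonic away from the origin, the term $\Delta_g(\phi(r)\log r)$ vanishes for $r<\epsilon/4$ and for $r>\epsilon/2$, hence extends smoothly by $0$ across $p$. Next I would verify the solvability condition $\langle W,\fu\rangle=0$: by conformal covariance $\Delta_g(\phi\log r)\,|\mu_g|=\Delta_{\mathrm{flat}}(\phi\log r)\,\mu_{\rz^2}$, so Lemma \ref{dlog} gives $\int_\Sigma\Delta_g(\phi\log r)\,|\mu_g|=2\pi$, while Gauss--Bonnet gives $\int_\Sigma\kappa_g\,|\mu_g|=2\pi\chi(\Sigma)=4\pi$; the two contributions cancel. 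Exactly as in Theorem \ref{genus0}, the self-adjoint Fredholm operator $\Delta_g$ then has $W$ in its image, so a smooth $v$ exists. It remains to read off the singularity: near $p$ one has $g'=e^{2(\rho-v)}|z|^{-4}|dz|^2$ with $\rho-v$ smooth and bounded, so the leading asymptotics $g'\sim c\,|z|^{-4}|dz|^2$ together with Lemma \ref{mcon} identify the cone angle as $-2\pi$, and completeness follows since this end is isometric to $\rz^2$ near infinity.

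For the spherical metric I would exploit that $(\Sigma\setminus\{p\},g')$ is a complete, flat, \emph{simply connected} surface (topologically $\Sigma\setminus\{p\}\cong\rz^2$). By Cartan--Hadamard it is globally isometric to the Euclidean plane, say via $\Phi:(\rz^2,\mathrm{std})\to(\Sigma\setminus\{p\},g')$. I then transport the round metric: writing $u(x)=4(1+|x|^2)^{-2}$, the metric $u\cdot\mathrm{std}$ on $\rz^2$ is the curvature $+1$ round sphere punctured at the north pole, so $g_{\mathrm{sph}}:=(u\circ\Phi^{-1})\,g'$ is a spherical metric on $\Sigma\setminus\{p\}$, conformal to $g'$ and hence lying in $[g]$.

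The main obstacle I expect is the final gluing step: showing that $g_{\mathrm{sph}}$ extends smoothly across the marked point $p$ to a genuine smooth spherical metric on $\Sigma$, compatibly with the given smooth structure. The point is that the cone angle $-2\pi$ is precisely the value for which the end at $p$ is an \emph{honest} flat end (a point at infinity of $\cpu$ rather than a true cone), so in the coordinate $w=1/z$ the isometry $\Phi$ becomes a Euclidean motion near $w=\infty$ and the round metric pulls back to the standard one across $p$; this removable-singularity feature of the angle $-2\pi$, already visible in Lemma \ref{mcon}, is where the essential work lies.
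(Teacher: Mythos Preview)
Your approach is essentially the paper's: solve $\Delta_g v=\kappa_g-2\Delta_g(\phi\log r)$ using Gauss--Bonnet for the compatibility condition, identify the resulting complete flat simply-connected $(\Sigma\setminus\{p\},g')$ with $\rz^2$ via the exponential map, and transport the round metric. For the extension step you flag as the main obstacle, the paper's argument is that $\Phi:\rz^2\to\Sigma\setminus\{p\}$, being a $g'$-isometry, is $g$-conformal and hence holomorphic, extends to a homeomorphism $\cpu\to\Sigma$ by one-point compactification, and then the singularity at $p$ is removable by Riemann's theorem---crisper than your $w=1/z$ heuristic; the paper also simplifies throughout by choosing $g$ already flat near $p$ (your $\rho\equiv 0$).
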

\begin{proof} 
Since $g$ is locally conformally flat, we may assume without loss of generality that $g$ is flat near a fixed point $p$. Let $(x,y)$ be flat orthogonal coordinates near $p=(0,0)$, and $(r,\sigma)\in(0,\epsilon)\times[0,2\pi)$ polar coordinates, such that $g=dr^2+r^2d\sigma^2$ near $r=0$.
By the Gauss-Bonnet formula, $\int_\Sigma\kappa_g\mu_g=4\pi$. Using Lemma \ref{dlog}, we deduce
\[\int_\Sigma(\kappa_g-2\Delta(\phi(r)\log r)))\mu_g=0.\]
The function $\kappa_g-2\Delta(\phi(r)\log r))$ is smooth with compact support on $\Sigma\setminus\{p\}$, in particular it is smooth on $\Sigma$. Any function of mean zero on a closed Riemannian manifold lives in the image of the Laplacian, i.e., there exists $f\in\cun(\Sigma)$ with $\Delta f=\kappa_g-2\Delta(\phi(r)\log r)$, or equivalently by \eqref{ccc}, such that the metric $g':=e^{-2f-4\phi(r)\log r}g$ is flat on its domain of definition $\Sigma\setminus\{p\}$.
Near $p=0$, we have by setting $R:=r^{-1}$:
\[g'=e^{-2f}\frac{dr^2+r^2d\sigma^2}{r^4}=e^{-2f}(dR^2+R^2d\sigma^2).\]
The conformal factor $f$ is smooth on $\Sigma$, so $e^{-2f}$ is bounded above and below by positive constants.
It follows that near the singularity, the flat metric $g'$ is quasi-isometric to the Euclidean metric
$dR^2+R^2d\sigma^2$, so in particular $g'$ is \emph{complete}. Since $\Sigma$ is by hypothesis a sphere, it follows that $\Sigma\setminus\{p\}$ is simply-connected. Therefore $\Sigma\setminus\{p\}$ is isometric to any of its tangent planes (which we identify with $\rz^2$) by the geodesic exponential map, denoted $\Phi$. This proves the first statement.

The isometry
$\Phi:\rz^2\to\Sigma\setminus\{p\}$ interchanges compact sets, it is therefore continuous, together with its inverse, at the point at infinity in the $1$-point compactifications of the two spaces, which are $\cpu$, respectively the initial surface $\Sigma$.

Now observe that $\Phi$ is an isometry with respect to the metric $g'$, it is therefore \emph{conformal}
with respect to $g$ outside $p$, or equivalently holomorphic with respect to the holomorphic structure from Corollary \ref{holstr}.

In conclusion, we have constructed a homeomorphism $\Phi:\cpu\to\Sigma$ which is smooth and bi-holomorphic
outside a point $p$. The singularity at $p$ is removable by continuity, thus $\Phi$ is globally bi-holomorphic. 
The desired spherical metric is obtained by push-forward of the standard metric on $\cpu$ via $\Phi$.
\end{proof}
The first statement is more precise - albeit in a particular case - than the corresponding statement from Theorem \ref{unifcone}, since we do not get here any translation component in the cone of angle $-2\pi$.

A similar approach works for $\rpd$:

\begin{theorem}
Let $\Sigma$ be a surface homeomorphic to $\rpd$ with a smooth structure and a conformal class $[g]$. There exist in $[g]$
\begin{itemize}
\item Cylindrical metrics, i.e., complete flat metrics outside one marked point $p$ with cone angle $0$;
\item Spherical metrics.
\end{itemize}
\end{theorem}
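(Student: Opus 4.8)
The first assertion parallels the proof of Theorem \ref{unifsph}, the only change being that $\chi(\rpd)=1$ rather than $2$. As before I may assume $g=dr^2+r^2d\sigma^2$ in polar coordinates near $p$; although $\Sigma$ is non-orientable this is harmless, since a small disk around $p$ is orientable and Theorem \ref{genus0}, whose proof already covers the non-orientable case, applies verbatim on all of $\Sigma$. By Gauss-Bonnet $\int_\Sigma\kappa_g\mu_g=2\pi$, so Lemma \ref{dlog} gives $\int_\Sigma(\kappa_g-\Delta(\phi(r)\log r))\mu_g=0$; the integrand is smooth on $\Sigma$, hence lies in the image of $\Delta$. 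Solving $\Delta f=\kappa_g-\Delta(\phi(r)\log r)$ and invoking \eqref{ccc}, the metric $g':=e^{-2f-2\phi(r)\log r}g$ is flat on $\Sigma\setminus\{p\}$, and near $p$ equals $e^{-2f}\frac{dr^2+r^2d\sigma^2}{r^2}$. By Lemma \ref{mcon} with $\theta=1$ this is cylindrical of circumference $2\pi$, and since $e^{-2f}$ is bounded above and below it is complete. This proves the first statement.

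For the spherical metric the argument of Theorem \ref{unifsph} breaks down: here $\Sigma\setminus\{p\}$ is a M\"obius band, not simply connected, so the exponential map of $g'$ is not a global isometry onto a plane. Instead I pass to the orientation double cover $\pi\colon\tilde\Sigma\to\Sigma$, where $\tilde\Sigma$ is a topological sphere carrying the lifted conformal class $[\pi^*g]$, and where the non-trivial deck transformation $\tau$ is a fixed-point-free, orientation-reversing involution. By Theorem \ref{unifsph} applied to $\tilde\Sigma$ there is a biholomorphism $\Phi\colon\cpu\to\tilde\Sigma$. Since $\tau$ reverses orientation and preserves $[\pi^*g]$, the conjugate $\hat\tau:=\Phi^{-1}\circ\tau\circ\Phi$ is a fixed-point-free \emph{anti}-holomorphic involution of $\cpu$.

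The crux is to normalize $\hat\tau$. Every anti-holomorphic automorphism of $\cpu$ has the form $z\mapsto\sigma(\bar z)$ for a unique $\sigma\in\mathrm{PSL}_2(\cz)$; lift $\sigma$ to $S\in\mathrm{SL}_2(\cz)$. The involutivity of $\hat\tau$ forces $S\bar S=\pm I$, and the antilinear map $j(v):=S\bar v$ on $\cz^2$ then satisfies $j^2=\pm 1$ accordingly. If $j^2=+1$, then for suitable $v$ the vector $v+jv$ is a nonzero $j$-eigenvector, producing a fixed point of $\hat\tau$; since $\hat\tau$ has none, we must have $j^2=-1$. Thus $j$ is a quaternionic structure on $\cz^2$: picking any $v\neq 0$, the pair $\{v,jv\}$ is a basis (they are independent, for $jv=\lambda v$ would give $|\lambda|^2=-1$), and in this basis $j$ acquires the standard form. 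Equivalently a matrix $M$ conjugates $S$ to $J:=\bigl(\begin{smallmatrix}0&-1\\1&0\end{smallmatrix}\bigr)$, so that $M\hat\tau M^{-1}$ is the antipodal map $A(z)=-1/\bar z$.

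Finally, $A$ is an isometry of the standard spherical metric $g_{S^2}$ on $\cpu$. Therefore $\Psi:=\Phi\circ M^{-1}$ satisfies $\tau\circ\Psi=\Psi\circ A$, whence $\tau_*(\Psi_*g_{S^2})=\Psi_*(A_*g_{S^2})=\Psi_*g_{S^2}$, so the metric $\Psi_*g_{S^2}$ of constant curvature $1$ on $\tilde\Sigma$ is $\tau$-invariant and descends through $\pi$ to a metric of constant curvature $1$ on $\Sigma$. As $\Psi$ is holomorphic, this metric lies in $[\pi^*g]$ upstairs and hence in $[g]$ downstairs, giving the desired spherical metric. The main obstacle is the normalization of $\hat\tau$ in the preceding paragraph; everything else transcribes the sphere case, and that obstacle dissolves once the fixed-point-free case is recognized as a quaternionic structure on $\cz^2$.
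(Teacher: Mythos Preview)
Your first part is essentially the paper's argument verbatim: flatten near $p$, use Gauss--Bonnet with $\chi(\rpd)=1$ and Lemma~\ref{dlog} to put $\kappa_g-\Delta(\phi(r)\log r)$ in the image of $\Delta$, and observe that the resulting flat metric $g'$ is quasi-isometric to $r^{-2}|dz|^2$ near $p$, hence complete. (One small imprecision: Lemma~\ref{mcon} alone does not show that $e^{-2f}\cdot r^{-2}(dr^2+r^2d\sigma^2)$ is cylindrical, only that the inner factor is; the conformal factor $e^{-2f}$, with $f$ harmonic near $p$, changes the circumference to $2\pi e^{-f(0)}$. This is harmless for completeness, which is all that is used.)

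For the spherical metric you take a genuinely different route from the paper. The paper stays with the flat picture: it uses that the complete flat Möbius band $(\Sigma\setminus\{p\},g')$ has universal cover $\rz^2$ with deck group generated by a glide reflection $z\mapsto\bar z+a$, observes that the same holds for the standard $\rpd\setminus\{p_0\}$, rescales to match the translation lengths, and then extends the resulting isometry across the puncture by a removable-singularity argument. Your approach instead lifts to the orientation double cover $\tilde\Sigma\simeq S^2$, invokes Theorem~\ref{unifsph} to get a biholomorphism $\Phi:\cpu\to\tilde\Sigma$, and then classifies fixed-point-free anti-holomorphic involutions of $\cpu$ via the dichotomy $S\bar S=\pm I$ on $\mathrm{SL}_2(\cz)$; the quaternionic-structure argument cleanly forces $\hat\tau$ to be conjugate to the antipodal map, after which the round metric descends. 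Both arguments are correct and of comparable length. The paper's version has the virtue of using the cylindrical metric just constructed and keeping everything inside flat geometry; yours has the virtue of reducing directly to the already-proved sphere case and replacing the glide-reflection classification and the removable-singularity step by a short piece of linear algebra over $\cz$.
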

\begin{proof}
We will use the notation and ideas of Theorem \ref{unifsph}.

We choose a metric $g\in[g]$ which is flat near $p$ and flat complex coordinates $z$ so that $g=|dz|^2$ near $p$. By Gauss-Bonnet and Lemma \ref{dlog}, the smooth function on $\Sigma$ defined by $\kappa_g-\Delta(\phi(r)\log r)$ is of zero mean, hence it lives in the image of $\Delta$, say $\Delta f=\kappa_g-\Delta(\phi(r)\log r)$. Thus by \eqref{ccc}, the metric $e^{-2f-2\phi(r)\log r}g$
is flat on the topological M\"obius band $\Sigma\setminus \{p\}$. This metric is quasi-isometric to $r^{-2}|dz|^2$ hence it is complete. It follows that its universal cover is isometric to a quotient of $\rz^2$ by some non-orientable infinite cyclic group of isometries. Such a group is conjugated to a group generated by $\gamma: z\mapsto \bar{z}+a$, $a\in \rz^*_+$. 

The same reasoning holds for the standard metric on $\rpd$, namely $\rpd\setminus\{p_0\}$ is isometric to $\rz^2/\langle \gamma_0\rangle$ for $\gamma_0: z\mapsto \bar{z}+a_0$, $a_0\in \rz^*_+$; after multiplying one of the two metrics by a constant, it follows that $\Sigma\setminus \{p\}$ and $\rpd\setminus\{p_0\}$ are isometric. Any choice of isometry $\Phi:\rpd\setminus\{p_0\}\to
\Sigma\setminus \{p\}$ will preserve closed and bounded sets (i.e., neighborhoods of $p, p_0$) i.e., $\Phi$ extends to the one-point compactifications as a homeomorphism from $\Sigma$ to $\rpd$. Moreover, $\Phi$ is conformal from the spherical metric on $\rpd\setminus\{p_0\}$ to $(\Sigma\setminus \{p\},g)$, so we deduce that $p$ is a removable singularity and $\Phi$ is a global conformal map from $\rpd$ to $\Sigma$, in particular smooth. The push-forward of the standard metric from $\rpd$ to $\Sigma$ is the desired spherical metric.
\end{proof}

\section{Conformally conical metrics}

To classify flat singular metrics we extend slightly the notion of conical metric of integer negative angle. A partial isometry of a metric space $\Sigma$ is an isometry from a subset of $\Sigma$ onto its image in $\Sigma$.

\begin{definition}
Let $l>0$ and $\theta\in \zz$, $\theta\geq 2$. On the universal cover of $\cz^*$ consider the deck transformation $\gamma$ corresponding to the generator $S^1$ of $\pi_1(\cz^*)$, and the partial isometry $\tau_l$ given by the translation with $l\in\rz$. A metric on a surface $\Sigma$ singular near $p$ is called \emph{conical of angle $2\pi(1-\theta)$ and translation component $l$} if it is isometric near $p$ with the quotient of $\tilde{\cz^*}$ by the partial isometry $\tau_{l}\gamma^{1-\theta}$.
\end{definition}
Geometrically, we can construct such a metric by considering the branched Riemann cover of the function $z^{1-\theta}$, cut along a pre-image of the positive half-line and glue back after translating by $l$. The resulting surface is well-defined near infinity, where the translation makes sense.

\begin{theorem}\label{confharmo}
Let $f$ be a real harmonic function in a neighborhood of $0\in\rz^2$, and $g$ a conical metric on $\rz^2$ near $0$, of angle $\alpha=2\pi(1-\theta)$.
\begin{itemize}
\item If the angle $\alpha$ is positive, or not an integer multiple of $2\pi$, then $e^{-2f}g$ is also conical of angle $\alpha$.
\item If $\alpha=0$ and $g$ is cylindrical of circumference $l$, then $e^{-2f}g$ is also cylindrical of circumference $e^{-f(0)}l$.
\item If $\alpha$ is a negative multiple of $2\pi$, then $e^{-2f}g$ is also conical of angle $\alpha$ and translation $l$ for some explicit $l$.
\end{itemize}
\end{theorem}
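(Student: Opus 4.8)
The plan is to trivialize the conformal factor by passing to a holomorphic primitive, so that the whole statement becomes a computation of monodromy of a single developing map. Since $f$ is harmonic on a neighborhood of $0$ in $\cz$, after shrinking to a disc I may write $f=\mathrm{Re}(F)$ for a holomorphic $F$, whence $e^{-2f}=|e^{-F}|^2$. By Lemma \ref{mcon} I normalize $g=|z|^{-2\theta}|dz|^2=|z^{-\theta}dz|^2$ (and $g=(l/2\pi)^2|z|^{-2}|dz|^2$ in the cylindrical case $\theta=1$, to carry the circumference). The crucial observation is then that
\[
e^{-2f}g=|\omega|^2,\qquad \omega:=e^{-F(z)}\,z^{-\theta}\,dz,
\]
an honest holomorphic $1$-form (multivalued when $\theta\notin\zz$, read on $\tilde{\cz^*}$). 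In particular $e^{-2f}g$ is flat away from $0$, and its entire singularity type is encoded in $\omega$; this replaces any further use of \eqref{ccc}.

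Next I develop the metric. Let $w=\int\omega$ be a primitive; it is a local isometry from $(\tilde{\cz^*},e^{-2f}g)$ onto $(\cz,|dw|^2)$, and the generator $\gamma$ of $\pi_1(\cz^*)$ acts by an orientation-preserving Euclidean isometry $w\mapsto e^{-2\pi i\theta}\,w+C$ for a constant $C$, as one checks by differentiating and using that $\gamma$ multiplies $\omega$ by $e^{-2\pi i\theta}$. Because $e^{-F}$ is holomorphic and nonvanishing at $0$, the leading exponent of $\omega$ is still $-\theta$, so $w\sim\frac{e^{-F(0)}}{1-\theta}z^{1-\theta}$ and $\arg w$ increases by $2\pi(1-\theta)$ around the puncture: the total cone angle is $\alpha=2\pi(1-\theta)$ \emph{in every case}. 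This already yields the angle assertion, and it remains only to read off the translation data from the linear/affine structure of the monodromy.

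The classification is then by whether the monodromy has a fixed point. If $\theta\notin\zz$ the linear part $e^{-2\pi i\theta}\neq1$, so the isometry fixes $w_*=C/(1-e^{-2\pi i\theta})$ and is conjugate to the pure rotation $w\mapsto e^{-2\pi i\theta}w$ about $w_*$; if instead $\theta\le0$ is an integer, then $\omega$ is holomorphic, $\Res_0\omega=0$, $C=0$, and the cone is the branched cover $w\sim z^{1-\theta}$. In both subcases there is no translation, giving a pure cone of angle $\alpha$ (first bullet). For $\theta=1$ the linear part is $1$ and $C=2\pi i\,\Res_0\omega=i\,l\,e^{-F(0)}$, so the monodromy is the translation by $C$ and the quotient is a cylinder of circumference $|C|=e^{-f(0)}l$ (second bullet). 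Finally for $\theta\ge2$ integer the linear part is again trivial, the branched-cover term produces the large cone of angle $2\pi(1-\theta)$, and the simple-pole part of $\omega$ contributes a non-removable $\log z$ whose monodromy $C=2\pi i\,\Res_0\omega$ is the translation component; here $l$ is explicit, computed from the $(\theta-1)$-jet of $f$ at $0$ (the coefficient of $z^{\theta-1}$ in $e^{-F}$).

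I expect the last case to be the main obstacle, and the difficulty is conceptual rather than computational. For $\theta\ge2$ the rotational part of the monodromy is trivial, yet the cone angle is the large negative $2\pi(1-\theta)$, so the angle cannot be read from the linear part of the monodromy and must be extracted from the leading branched-cover term, while the residue simultaneously forces the translation $\tau_l$. Matching this developing-map description with the definition of a cone of angle $2\pi(1-\theta)$ and translation $l$ via $\tau_l\gamma^{1-\theta}$ on $\tilde{\cz^*}$ — in particular rotating $C$ onto the real axis so that $l\in\rz$ is the translation along the cut, and checking that the two quotients agree near infinity — is where the genuine care is required.
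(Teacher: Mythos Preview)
Your approach is correct and genuinely different from the paper's. Both proofs begin by writing $e^{-2f}g=|\omega|^2$ with $\omega=e^{-F(z)}z^{-\theta}\,dz$, but then diverge. The paper proceeds by seeking an explicit holomorphic change of variable $u=u(z)$ putting $\omega$ into a normal form: for $\theta\notin\nz^*$ it solves $e^{-F}z^{-\theta}dz=u^{-\theta}du$ via a power-series ODE for $h=u/z$ (the denominators $1-\theta+n$ never vanish precisely when $\theta\notin\nz^*$); for $\theta=1$ it solves $e^{-F}z^{-1}dz=e^{-F(0)}u^{-1}du$ similarly; and for $\theta\ge 2$ it reduces to the model $\omega=(1+cu^{\theta-1})u^{-\theta}du$ with $c=\Res_0(z^{-\theta}e^{-F})$ by the implicit function theorem, then identifies that model via a separate lemma. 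You instead pass directly to the developing map $w=\int\omega$ on $\tilde{\cz^*}$ and read off the singularity type from the affine holonomy $w\mapsto e^{-2\pi i\theta}w+C$ together with the leading asymptotic $w\sim\frac{e^{-F(0)}}{1-\theta}z^{1-\theta}$; the trichotomy is then whether the linear part is nontrivial (fixed point, pure cone), or trivial with $C$ coming only from the residue (cylinder or large cone with translation). Your route is more geometric and avoids the case-by-case series computations, at the cost of invoking the standard $(G,X)$-structure formalism to pass from ``correct holonomy plus correct asymptotics'' to ``isometric to the model''; the paper's route is more elementary but more computational. The two proofs reconverge in the third case: your final matching of $\tau_{|C|}\gamma^{1-\theta}$ with the definition near infinity is exactly the content of the paper's lemma following the theorem, and both yield the same explicit translation $l=2\pi\,|\Res_0(z^{-\theta}e^{-F})|$.
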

\begin{proof}
If $\alpha=2\pi$ (i.e., $\theta=0$) there is nothing to prove, since any smooth flat metric is conical of angle $2\pi$. Assume therefore $\theta\neq 0$.
Let $F(z)$ be the unique holomorphic function with real part $f$, and imaginary part vanishing at $0$.
Note that $e^{-2f}=|e^{-F}|^2$. 

\subsection*{Case 1.} Assume $\theta\notin \nz^*=\{1,2,\ldots\}$. By Lemma \ref{mcon} the conical metric takes the form $\frac{dr^2+r^2 d\sigma^2}{r^{2\theta}}=|z^{-\theta}dz|^2$, so
\[e^{-2f}g=|e^{-F}z^{-\theta}dz|^2.\]
We would like to find a new complex coordinate $u$ near $z=0$ so that
$e^{-2f}g=|u^{-\theta}du|^2$. We look for $u$ of the form $u=zh(z)$ 
with $h(z)$ holomorphic near $0$, $h(0)\neq 0$ so that 
\[e^{-F}z^{-\theta}dz=u^{-\theta}du.\] 
Writing $e^{-F(z)}=\sum_{n=0}^\infty a_n z^n$, and $v(z):=h^{1-\theta}(z)=\sum_{n=0}^\infty v_nz^n$ a simple calculation shows that the coefficients of $v$ are uniquely determined by those of $e^{-F}$:
\[v+\frac{z}{1-\theta} v'=e^{-F(z)},\]
or equivalently $v_n=\frac{1-\theta}{1-\theta+n} a_n$. The resulting Taylor series for $v$ is clearly convergent for small $|z|$ since $F$ is holomorphic near $0$. Moreover $v_0\neq 0$ since $a_0=e^{-F(0)}\neq 0$, thus there exists
$h:=e^{\frac{\log(v)}{1-\theta}}$ with the property that $g=|u^{-\theta}du|^2$ is conical of angle $2\pi(1-\theta)$.

\subsection*{Case 2.} Assume $\theta=1$. The conical metric of angle $0$ (i.e., cylindrical) and circumference $l$ is 
\[g=\left(\tfrac{l}{2\pi}\right)^2\left|\frac{dz}{z}\right|^2. 
\]
We want to find $u(z)$ near $z=0$ so that 
\[e^{-2f}g=e^{-2f(0))}\left(\tfrac{l}{2\pi}\right)^2\left|\frac{du}{u}\right|^2.\]
We proceed as above by solving $e^{-F(z)}z^{-1}dz=e^{-F(0)} u^{-1}du$ for $u(z)=z e^{v(z)}$, or equivalently 
\[e^{F(0)-F(z)}=1+zv'(z).\]
This equation determines uniquely a convergent series for $v(z)$ with $v(0)=0$.

\subsection*{Case 3.} Assume now $\zz\ni\theta\geq 2$, thus the angle is a negative integer multiple of $2\pi$.
We claim that we can find a complex coordinate $u$ near $0$ so that
\[e^{-2f}g=\left|\frac{(1+cu^{\theta-1})du}{u^\theta}\right|^2.\]
for some constant $c$ depending on $f$. The strategy is to solve instead
\[e^{-F(z)}\frac{dz}{z^\theta}=\frac{(1+cu^{\theta-1})du}{u^\theta}\]
with $u=zh^{\frac{1}{1-\theta}}(z)$ for some $h(z)$ holomorphic, $h(0)\neq 0$. Setting $e^{-F(z)}=\sum_{n=0}^\infty a_n z^n$, the equation to solve is
\[\sum_{\theta-1\neq n\geq 0} \frac{a_n}{n+1-\theta} z^{n+1-\theta} + a_{\theta-1}\log z= \frac{z^{1-\theta}h(z)}{1-\theta} +c\log z +\frac{c}{1-\theta}\log h.
\]
This equation has a solution only if $c=a_{\theta-1}=\Res(z^{-\theta}e^{-F(z)})$. With this value for $c$, we would like to know that the solution $h$ is holomorphic. By denoting $A(z)$ the left-hand side times $(1-\theta)z^{1-\theta}$, we write
\[h(z)+cz^{\theta-1}\log h(z)= A(z).\]
Note that $A(0)=e^{-F(0)}\neq 0$.
The holomorphic function $\Phi(h,z):=h+cz^{\theta-1}\log h-A(z)$ defined in a neighborhood of $h=A(0),z=0$ has the property 
\begin{align*}
\Phi(A(0),0)=0,&&\frac{\partial\Phi(A(0),0)}{\partial h}=1,
\end{align*}
therefore by the implicit function theorem there exists $h(z)$ holomorphic
in $z$ such that $\Phi(h(z),z)=0$. 
The third case of the theorem follows from the next Lemma. The translation component is therefore
$2\pi |\Res(z^{-\theta}e^{-F(z)})|$, where $F$ is the holomorphic function of real part $f$.
\end{proof}

\begin{lemma}
Let $c\in\cz$. The flat metric $g_c=\left|\frac{(1+cu^{\theta-1})du}{u^\theta}\right|^2$ is conical near $u=0$ of angle $2\pi(1-\theta)$ and translation component $2\pi|c|$.
\end{lemma}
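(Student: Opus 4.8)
The plan is to construct the developing map of the flat metric $g_c$, i.e. a primitive of the holomorphic $1$-form $\omega:=(u^{-\theta}+cu^{-1})\,du$ whose squared modulus is $g_c$. Since $\theta\ge 2$, I can integrate $\omega$ explicitly on the universal cover of the punctured disk:
\[
W(u)=\frac{u^{1-\theta}}{1-\theta}+c\log u,
\]
so that $g_c=|dW|^2$. First I would check that $W$ is a local biholomorphism near $0$: indeed $W'(u)=u^{-\theta}(1+cu^{\theta-1})$ is nowhere zero on a small punctured disk, because $1+cu^{\theta-1}\to 1$ as $u\to 0$ while $u^{-\theta}\neq 0$. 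Hence $W$ is a local isometry from $g_c$ to the Euclidean plane $|dw|^2$, exactly as the pure cone $g_0=|u^{-\theta}du|^2$ is the Euclidean plane pulled back by the $(\theta-1)$-fold branched cover $u\mapsto \frac{u^{1-\theta}}{1-\theta}$ (cf. Lemma \ref{mcon}).

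The key computation is the monodromy of $W$ around $u=0$. Continuing $W$ once around the puncture replaces $\log u$ by $\log u+2\pi i$, while the term $u^{1-\theta}$ is \emph{unchanged} because $1-\theta\in\zz$. Thus the deck transformation $\gamma$ of the cover acts on the developing image by the translation
\[
T\colon\ w\longmapsto w+2\pi i\,c.
\]
In particular the holonomy of $g_c$ around the singularity is a pure translation, its rotational part being trivial. This is consistent with the angle $2\pi(1-\theta)$ being a negative integer multiple of $2\pi$ (the very feature that, in Theorem \ref{confharmo}, distinguishes Case 3 from Cases 1--2 and allows a translation component at all), and the translation vector $2\pi i c$ has length $2\pi|c|$.

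From here I would read off both invariants. Near the cone point $u\to 0$ the dominant term of $W$ is $\frac{u^{1-\theta}}{1-\theta}$, whose modulus tends to $\infty$, while $c\log u$ grows only logarithmically and hence does not affect the branching; the leading part therefore realizes the same $(\theta-1)$-fold branched cover as the case $c=0$, giving angle $2\pi(1-\theta)$, and the correction $c\log u$ merely shears the sheets by $T$. Concretely, $g_c$ is isometric to the quotient of the developing image of $W$ by $\langle T\rangle$, which is precisely the branched Riemann cover of $u\mapsto u^{1-\theta}$ cut along a preimage of the positive half-line and reglued after a translation of length $2\pi|c|$ — i.e. the cut-and-glue construction preceding the Definition, with $l=2\pi|c|$.

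The main obstacle I anticipate is not the computation above but matching it rigorously with the formal definition of ``conical of angle $2\pi(1-\theta)$ and translation component $l$'' as the quotient of $\widetilde{\cz^*}$ by $\tau_l\gamma^{1-\theta}$: one must verify that the rotation-free translation $T$ of length $2\pi|c|$ produced by the monodromy corresponds, under the identification of developing images, to the partial isometry $\tau_l\gamma^{1-\theta}$ with $l=2\pi|c|$, and in particular that the $\log$-perturbation genuinely alters only the translation component and leaves the angle $2\pi(1-\theta)$ intact. Once this identification is in place, the Lemma follows.
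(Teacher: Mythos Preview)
Your approach coincides with the paper's: both construct the primitive $W(u)=\frac{u^{1-\theta}}{1-\theta}+c\log u$ of $\omega$ and analyze its monodromy around $u=0$. The paper first normalizes $c=il$, $l>0$, via $u\mapsto\alpha u$ with $|\alpha|=1$, but this is inessential.

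The one point where the paper goes further is exactly the obstacle you flag at the end. Rather than viewing $W$ as a map into $\cz$, the paper lifts it to the universal cover $\widetilde{\cz^*}$ (legitimate since $|W(u)|\to\infty$ as $u\to 0$, so $W$ avoids $0$, and $\tilde X$ is simply connected). Tracking this lift under the deck transformation $\gamma$ of $\tilde X$ yields $W(\gamma z)=\rho^{\theta-1}W(z)-2\pi l$, where $\rho$ generates the deck group of $\widetilde{\cz^*}\to\cz^*$; this is conjugate to the partial isometry $\tau_{2\pi l}\gamma^{1-\theta}$ of the Definition, so the angle $2\pi(1-\theta)$ and the translation component $2\pi|c|$ are read off simultaneously. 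Your monodromy computation in $\cz$ is correct, but, as you suspected, a pure translation in $\cz$ cannot by itself pin down the cone angle---every integer-multiple-of-$2\pi$ cone has rotation-free holonomy there; the lift to $\widetilde{\cz^*}$ is precisely what records the winding $1-\theta$. Relatedly, the sentence ``$g_c$ is isometric to the quotient of the developing image of $W$ by $\langle T\rangle$'' is imprecise because $W:\tilde X\to\cz$ is generally not injective; working in $\widetilde{\cz^*}$ remedies this as well.
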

\begin{proof}
By changing variables $z=\alpha u$ with $\alpha\in\cz, |\alpha|=1$ we see that $g_c$ is isometric to $g_{\alpha^{\theta-1}c}$, hence we may assume $c=il$, $l\in\rz$, $l>0$. Let $X:=\{z\in\cz^*;|z|<\epsilon\}$.
Starting from the half-line $z>0$ define $v:=\frac{1}{1-\theta}z^\frac{1}{1-\theta}+il\log z$ using the principal branch of the logarithm. Since $g=|dv^2|$, it follows that for $\arg(z)\leq 2\pi$, $v$ is an isometry from $(\tilde{X},g_{\theta,c})$ to a neighborhood of infinity in the universal cover $\tilde{\cz}^*$ with its standard flat metric. Moreover, for $z\in \tilde{X}$ we clearly have $v(\gamma z)=\rho^{\theta-1}v(x)-2\pi l$ for $\gamma$ the generator of the deck transformation group of $\tilde{X}\to X$ (i.e., the lift of rotation by $2\pi$), and $\rho$ the corresponding generator for $\tilde{\cz}^*\to\cz^*$. We see that $\gamma$ is conjugated to $u\mapsto \rho^{\theta-1}u-2\pi l$ via the partial isometry $v$. Thus $X$ is isometric to the flat surface obtained from $\tilde{\cz}^*$ by identifying two half-rays in the pre-image of the positive real axis in $\tilde{\cz}^*$ encompassing a sector of angle $2\pi(\theta-1)$ via the translation by $2\pi l$ (since for $c\neq 0$ this operation only makes sense near infinity, we get an incomplete metric).
\end{proof}

\section{Uniformization by flat singular metrics}

Let $\Sigma$ be a closed surface with a fixed conformal class $[g]$. 
Choose points $p_1,\ldots, p_N\in \Sigma$ and real numbers $\alpha_j$, $1\leq j\leq N$.
\begin{theorem}\label{unifcone}
There exist on $Y:=\Sigma\setminus \{p_1,\ldots, p_N\}$ a flat metric in the conformal class of $g$
with conical singularities of angles $\alpha_j$ at $p_j$, $1\leq j\leq N$ if and only if
\begin{equation}\label{condgb}
\sum_{j=1}^N \alpha_j=2\pi N-2\pi \chi(\Sigma).
\end{equation}
If $\alpha_j\in -2\pi \nz^*$, the conical singularity at $p_j$ may have a translation component. The flat metric is unique inside its conformal class up to homothety.
\end{theorem}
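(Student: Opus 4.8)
The plan is to prove the two implications of the equivalence separately and then establish uniqueness. For necessity I would invoke the Gauss--Bonnet theorem for surfaces with conical singularities: a flat metric with a cone point of angle $\alpha_j$ carries curvature concentrated at $p_j$ equal to the angle defect $2\pi-\alpha_j$, so that $\sum_j(2\pi-\alpha_j)=2\pi\chi(\Sigma)$, which rearranges at once into \eqref{condgb}. If one prefers to stay self-contained, the same identity falls out of the integration performed in the existence argument below, by subtracting the logarithmic singularities with Lemma \ref{dlog} and integrating the resulting smooth function, whose mean vanishes precisely under \eqref{condgb}.

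For sufficiency, set $\theta_j:=1-\alpha_j/(2\pi)$, so the target angle is $\alpha_j=2\pi(1-\theta_j)$. Since $g$ is locally conformally flat by Corollary \ref{holstr}, I may assume $g=dr_j^2+r_j^2d\sigma_j^2$ in polar coordinates near each $p_j$. Fixing cut-offs $\phi_j$ as in Lemma \ref{dlog} supported near $p_j$, I look for the flat metric in the form $g'=e^{-2F}g$ with
\[F=\sum_{j=1}^N\theta_j\,\phi_j(r_j)\log r_j+f,\qquad f\in\cun(\Sigma).\]
By \eqref{ccc}, $g'$ is flat on $Y$ exactly when $\Delta_g F=\kappa_g$, i.e. $\Delta_g f=\kappa_g-\sum_j\theta_j\Delta_g(\phi_j(r_j)\log r_j)$. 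Each term $\Delta_g(\phi_j\log r_j)$ is smooth and compactly supported away from $p_j$ by Lemma \ref{dlog} (it vanishes where $\phi_j\equiv1$, $\log r$ being flat-harmonic), so the right-hand side is smooth on $\Sigma$; and, using $\int_\Sigma\kappa_g\mu_g=2\pi\chi(\Sigma)$ and Lemma \ref{dlog}, its integral is $2\pi\chi(\Sigma)-2\pi\sum_j\theta_j$, which vanishes iff $\sum_j\theta_j=\chi(\Sigma)$, i.e. iff \eqref{condgb} holds (recall $\sum_j\theta_j=N-\tfrac1{2\pi}\sum_j\alpha_j$). The Fredholm argument of Theorem \ref{genus0} then produces $f$, unique up to an additive constant.

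It remains to identify the singularity of $g'$ at each $p_j$, and this is the step I expect to be the crux. Near $p_j$, where $\phi_j\equiv1$, one has $g'=e^{-2f}\,\frac{dr_j^2+r_j^2d\sigma_j^2}{r_j^{2\theta_j}}$, a smooth conformal multiple of the model cone of angle $\alpha_j$ (Lemma \ref{mcon}). The key observation is that $f$ is \emph{automatically} harmonic near $p_j$: both $g'$ and the model cone are flat, so \eqref{ccc} forces $\Delta f=0$ there, harmonicity being a conformal invariant in dimension two. Hence Theorem \ref{confharmo} applies verbatim and shows $g'$ is conical of angle $\alpha_j$, with a translation component $2\pi|\Res(z^{-\theta_j}e^{-F})|$ exactly in the exceptional case $\alpha_j\in-2\pi\nz^*$. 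This yields existence; controlling the translation component in the integer case is the delicate point, and it is precisely what Theorem \ref{confharmo} was designed to handle.

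For uniqueness, suppose $g_i=e^{-2u_i}g$, $i=1,2$, are flat with the prescribed angles, and set $w:=u_2-u_1$, so $g_2=e^{-2w}g_1$. Flatness of both on $Y$ gives through \eqref{ccc} that $\Delta_{g_1}w=0$, so $w$ is harmonic on $Y$ with respect to $g$ by conformal invariance. In a conformal coordinate $z$ centred at $p_j$, the normal forms of Lemma \ref{mcon} and Theorem \ref{confharmo} show each $g_i$ has conformal factor $|z|^{-2\theta_j}\psi_i(z)$ with $\psi_i$ continuous and nonzero at $0$ (the translation correction in the large-conical case being of lower order), whence $e^{-2w}=\psi_2/\psi_1$ extends continuously and positively across $p_j$. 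Thus $w$ is bounded near each marked point; its isolated singularities are removable, so $w$ extends harmonically to the closed surface $\Sigma$ and is therefore constant. Consequently $g_2$ is a homothety of $g_1$, as asserted.
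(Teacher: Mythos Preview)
Your proposal is correct and follows essentially the same approach as the paper: the same ansatz $g'=e^{-2f-2\Theta}g$ with $\Theta=\sum_j\theta_j\phi_j\log r_j$, the same use of Lemma~\ref{dlog} and the Fredholm alternative to solve for $f$, and the same appeal to Theorem~\ref{confharmo} once $f$ is seen to be harmonic near each $p_j$. Your uniqueness argument is more detailed than the paper's one-line remark (``harmonic functions on $\Sigma$ must be constant''): you explicitly verify, via the normal forms, that the log-ratio $w$ stays bounded across the marked points and hence extends, which is the point the paper leaves implicit.
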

\begin{remark}
For $\alpha_j$ positive and $\Sigma$ orientable the result appears in Troyanov \cite{Troy}.
\end{remark}

\begin{proof}
The necessity of \eqref{condgb} follows from Gauss-Bonnet, since by Lemma \ref{dlog} every conical point $p_j$ contributes $2\pi\theta_j$ to the total curvature. Uniqueness up to homothety is a consequence of the fact that harmonic functions on $\Sigma$ must be constant.

Since any surface is locally conformally flat, we may assume that $g$ is flat in an $\epsilon$-neighborhood of the marked points $p_j$. Near each $p_j$ consider flat coordinates given for instance by the geodesic exponential map, and then polar coordinates $r_j\in[0,\epsilon),\sigma_j\in[0,2\pi)$. Recall that $\Delta$ is the Laplacian of the metric $g$ on $\Sigma$.
It follows from the Gauss-Bonnet formula and Lemma \ref{dlog} that for any reals $\theta_j$, if we set $\Theta:=\sum_{j=1}^N \theta_j\phi(r)\log(r)$, we have
\[\int_\Sigma(\kappa_g -\Delta\Theta)|\mu_g|=2\pi\left(\chi(\Sigma)-\sum_{j=1}^N\theta_j\right).\]
If we choose $\theta_j$ so that $\alpha_j=2\pi(1-\theta_j)$ then condition \eqref{condgb} is equivalent to 
\begin{equation}\label{condthet}
\sum_{j=1}^N\theta_j=\chi(\Sigma).
\end{equation}
Thus with this choice, the smooth function $\kappa_g -\Delta\Theta$ on $\Sigma$ has zero mean, therefore it lives in the image of the Laplace operator of $g$. In other words, there exists $f\in\cun(\Sigma)$ with
\begin{equation}\label{kft}
\kappa_g=\Delta(f+\Theta).
\end{equation}
This is equivalent to the flatness of the metric $g':=e^{-2f-2\Theta}g$ on $Y=\Sigma\setminus\{p_1,\ldots,p_N\}$.

We claim that $g'$ is conical with angles $2\pi(1-\theta_j)$ at $p_j$. In polar coordinates near $p_j$ the initial metric is $dr^2+r^2d\sigma^2$, while $\Theta=\theta_j\log(r)$ for $r<\epsilon/4$.
This means that near $p_j$, the metric $g'$ takes the form 
\[g'=e^{-2f-2\theta_j\log r} (dr^2+r^2d\sigma^2)=e^{-2f}\left(\frac{dr^2+r^2d\sigma^2}{r^{2\theta_j}}\right).\]
From \eqref{kft}, the function $f$ is harmonic near $p_j$, while from lemma \ref{mcon} the metric inside the bracket is conical of angle $2\pi(1-\theta_j)$. From Theorem \ref{confharmo} it follows that $g'$ is conical of angle $2\pi(1-\theta_j)$, with possible translation component if $\theta_j$ is an integer greater than or equal to $2$.
\end{proof}

It is not clear from the proof if translations do appear in the cones, or are simply a byproduct of our method. To prove their existence we give an argument based on complex analysis.

\section{Link with Abelian differentials}

Assume $\Sigma$ is orientable, hence a compact complex curve.

For $\theta_j$ integers satisfying \eqref{condthet}, the conclusion of Theorem \ref{unifcone}
follows if we can find a meromorphic $1$-form
$\mu\in\cM(\Sigma,\Lambda^{1,0}(\Sigma))$ (with respect to the holomorphic structure defined by our fixed conformal structure) with prescribed zeros or poles of order $\theta_j$ at $p_j$. Then the symmetric tensor product 
\[|\mu|^2:=\tfrac12 (\mu\otimes\bar{\mu}+\bar{\mu}\otimes\mu)\]
is a flat conical metric on $\Sigma\setminus\{p_1,\ldots,p_N\}$ with angles $2\pi(1-\theta_j)$ at $p_j$. 
More generally, the absolute value of a meromorphic quadratic differential $A\in\cM(\Sigma,\Lambda^{1,0}(\Sigma) \otimes \Lambda^{1,0}(\Sigma))$ is a flat conical metric with angles integer multiples of $\pi$. Zeros or poles of order $m$ correspond to cone angles $\pi(2-m)$. These facts are consequences of Theorem \ref{confharmo} since for any meromorphic function $\nu(z)$, the real function $\log|\nu(z)|$ is harmonic outside its zeros and poles.

For the flat conical metric $|\mu|^2$ defined by any Abelian differential $\mu$, the translation component in the cones of angle a negative integer multiple of $2\pi$, respectively the circumference of the cylinders, are given by the absolute value of the residues of $\mu$ at the singular points. These residues are generically nonzero for meromorphic Abelian differential of third kind.

\subsection*{Acknowledgements} I am indebted to C.~Guillarmou, C.~Joita, A.~Moroianu and J.-M.~Schlenker for discussions on the subject of this paper. The hospitality of the IHES at Bures-sur-Yvette is gratefully acknowledged.

\end{document}